\newtheorem{theorem}{Theorem}[section]
\newtheorem{proposition}[theorem]{Proposition}
\newtheorem{lemma}[theorem]{Lemma}
\theoremstyle{remark}
\newtheorem{remark}[theorem]{Remark}
\theoremstyle{definition}
\newtheorem{definition}[theorem]{Definition}
\begin{document}

\bibliographystyle{abbrvnat}

% -------------------------------------------------

\title{Existence and uniqueness results for a fractional\\ 
Riemann--Liouville nonlocal thermistor problem\\
on arbitrary time scales\thanks{This is a preprint of a paper 
whose final and definite form is with 
'Journal of King Saud University -- Science', ISSN 1018-3647. 
Submitted 16-Dec-2016; revised 18-Feb-2017; accepted for publication 14-March-2017.}}

\author{Moulay Rchid Sidi Ammi$^1$\\
\texttt{rachidsidiammi@yahoo.fr}\\[0.3cm]
\and
Delfim F. M. Torres$^2$\\
\texttt{delfim@ua.pt}}

\date{$^1$AMNEA Group, Department of Mathematics,\\
Faculty of Sciences and Techniques, Moulay Ismail University,\\
B.P. 509, Errachidia, Morocco\\[0.3cm]
$^2$Center for Research \& Development in Mathematics and Applications (CIDMA),\\
Department of Mathematics, University of Aveiro, 3810-193 Aveiro, Portugal}

\maketitle

% -------------------------------------------------

\begin{abstract}
Using a fixed point theorem in a proper Banach space, 
we prove existence and uniqueness results of positive solutions 
for a fractional Riemann--Liouville nonlocal thermistor 
problem on arbitrary nonempty closed subsets of the real numbers.
\end{abstract}

% -------------------------------------------------

\smallskip

\textbf{Mathematics Subject Classification 2010:} 26A33, 26E70, 35B09, 45M20.

\smallskip

% -------------------------------------------------

\smallskip

\textbf{Keywords:} 
fractional Riemann--Liouville derivatives; 
nonlocal thermistor problem on time scales; 
fixed point theorem; 
dynamic equations; 
positive solutions.

\medskip

% -------------------------------------------------

\section{Introduction}

The calculus on time scales is a recent area of research introduced 
by \citet{MR1062633}, unifying and extending the theories 
of difference and differential equations into a single theory.
A time scale is a model of time, and the theory has found important applications in several 
contexts that require simultaneous modeling of discrete and continuous data. It is 
under strong current research in areas as diverse as the calculus of variations, 
optimal control, economics, biology, quantum calculus, communication networks 
and robotic control. The interested reader is refereed 
to \citet{agarwal1,agarwal2,bohner1,bohner2,MR2671876,MR3498485} 
and references therein.

On the other hand, many phenomena in engineering, physics and other
sciences, can be successfully modeled by using mathematical
tools inspired by the fractional calculus, that is,
the theory of derivatives and integrals of noninteger order. 
See, for example, \citet{gaul,hilfer,srivastava2,sabatier,samko,srivastava1}. 
This allows one to describe physical phenomena more accurately. 
In this line of thought, fractional differential equations
have emerged in recent years as an interdisciplinary area of research
\citep{MR2962045}. The nonlocal nature of fractional derivatives
can be utilized to simulate accurately diversified natural 
phenomena containing long memory \citep{MR3316531,MR2736622}.

A thermistor is a thermally sensitive resistor whose electrical conductivity changes
drastically by orders of magnitude, as the temperature reaches a certain threshold.
Thermistors are used as temperature control elements in a wide variety of military
and industrial equipment, ranging from space vehicles to air conditioning controllers.
They are also used in the medical field, for localized and general body temperature
measurement; in meteorology, for weather forecasting; as well as in chemical industries,
as process temperature sensors \citep{kwok,maclen}. 

Throughout the reminder of the paper, we denote by $\mathbb{T}$ a time scale, 
which is a nonempty closed subset of $\mathbb{R}$ with its inherited topology. 
For convenience, we make the blanket assumption that $t_{0}$ and $T$ 
are points in $\mathbb{T}$. Our main concern is to prove existence and uniqueness
of solution to a fractional order nonlocal thermistor problem of the form
\begin{equation}
\label{eq1}
\begin{gathered}
 _{t_{0}}^{\mathbb{T}}D^{2 \alpha}_{t} u(t) 
 = \frac{\lambda f(u)}{\left(\int_{t_{0}}^{T} f(u)\, \triangle x\right)^{2}} 
 \, , \quad  t \in (t_{0}, T)  \, , \\
_{t_{0}}^{\mathbb{T}}I_{t}^{\beta}u(t_{0})=0,  
\quad    \forall \, \beta \in  (0, 1),
\end{gathered}
\end{equation}
under suitable conditions on $f$ as described below.
We assume that $\alpha \in (0,1)$ is a parameter describing 
the order of the fractional derivative;
$ _{t_{0}}^{\mathbb{T}}D^{2 \alpha}_{t}$ is
the left Riemann--Liouville fractional derivative operator of order 
$2 \alpha$ on $\mathbb{T}$; $ _{t_{0}}^{\mathbb{T}}I^{\beta}_{t}$ is
the left Riemann--Liouville fractional integral operator 
of order $\beta$ defined on $\mathbb{T}$ by \citet{MyID:328}. 
By $u$, we denote the temperature inside the conductor; 
$f(u)$ is the electrical conductivity of the material.

In the literature, many existence results for dynamic equations on time scales
are available \citep{dogan1,dogan2}. In recent years, there has been also 
significant interest in the use of fractional differential equations
in mathematical modeling \citep{MR3370824,MR3529931,MR3535730}. 
However, much of the work published to date has been 
concerned separately, either by the time-scale community or 
by the fractional one. Results on fractional dynamic equations 
on time scales are scarce \citep{ahmad}.

In contrast with our previous works, which make use of fixed point theorems 
like the Krasnoselskii fixed point theorem, the fixed point index theory, 
and the Leggett--Williams fixed point theorem,  
to obtain several results of existence of positive solutions to linear and
nonlinear dynamic equations on time scales, and recently also to fractional differential
equations \citep{sidiammi3,sidiammi1,sidiammi2,GL}; 
here we prove new existence and uniqueness results for the fractional order
nonlocal thermistor problem on time scales \eqref{eq1},
putting together time scale and fractional domains.
This seems to be quite appropriate from the point 
of view of practical applications \citep{MR3323914,MyID:358,MR3498485}.

The rest of the article is arranged as follows. In Section~\ref{section2}, 
we state preliminary definitions, notations, propositions and properties 
of the fractional operators on time scales needed in the sequel. 
Our main aim is to prove existence of solutions for \eqref{eq1} 
using a fixed point theorem and, consequently, uniqueness. 
This is done in Section~\ref{section3}: 
see Theorems~\ref{thm1} and \ref{corollary}.

% -------------------------------------------------

\section{Preliminaries}
\label{section2}

In this section, we recall fundamental definitions, hypotheses and preliminary
facts that are used through the paper. For more details, see the seminal paper
\citet{MyID:328}. From physical considerations, we assume that the electrical 
conductivity is bounded \citep{ac}. Precisely, we consider the following assumption:
\begin{itemize}
\item[(H1)] function $f: \mathbb{R}^{+}\rightarrow \mathbb{R}^{+}$ 
of problem \eqref{eq1} is Lipschitz continuous with Lipschitz constant 
$L_{f}$ such that $c_{1} \leq f(u) \leq c_{2}$,
with $c_{1}$ and $c_{2}$ two positive constants.
\end{itemize}
We deal with the notions of left Riemann--Liouville fractional integral 
and derivative on time scales, as proposed in \citet{MyID:328}, the so called 
BHT fractional calculus on time scales \citep{MyID:358}. The corresponding 
right operators are obtained by changing the limits of integrals 
from $a$ to $t$ into $t$ to $b$. For local approaches to fractional calculus 
on arbitrary time scales we refer the reader to \citet{ben,MyID:320}. 
Here we are interested in nonlocal operators, which are the ones who make
sense with respect to the thermistor problem \citep{MR2397904,MR2980253}. 
Although we restrict ourselves to the delta approach on time scales, 
similar results are trivially obtained for the nabla fractional 
case \citep{MyID:224}.

\begin{definition}[Riemann--Liouville fractional integral on time scales \citep{MyID:328}]
\label{def1}
Let $\mathbb{T}$ be a time scale and $[a, b]$ an interval of $\mathbb{T}$.  
Then the left fractional integral on time scales of order $ 0 < \alpha <1$
of a function $g: \mathbb{T} \rightarrow \mathbb{R}$ is defined by
$$
_{a}^{\mathbb{T}}I^{\alpha}_{t} g(t) 
= \int_a^t \frac{(t - s)^{ \alpha-1}}{\Gamma( \alpha)} g(s)\, \triangle s,
$$
where $\Gamma $ is the Euler gamma function.
\end{definition}

The left Riemann--Liouville fractional derivative operator 
of order $\alpha$ on time scales is then defined using
Definition~\ref{def1} of fractional integral.

\begin{definition}[Riemann--Liouville fractional derivative on time scales \citep{MyID:328}]
\label{def2}
Let $\mathbb{T}$ be a time scale, $[a, b]$ an interval of $\mathbb{T}$, 
and $0< \alpha < 1$. Then the left Riemann--Liouville fractional derivative 
on time scales of order $\alpha$ of a function $g: \mathbb{T} \rightarrow \mathbb{R}$ 
is defined by
$$
_{a}^{\mathbb{T}}D^{\alpha}_{t} g(t) 
= \left (\int_a^t \frac{(t - s)^{ -\alpha}}{\Gamma(1- \alpha)} 
g(s)\, \triangle s \right)^{\triangle}.
$$
\end{definition}

\begin{remark}
If $\mathbb{T}= \mathbb{R}$, then we obtain from Definitions~\ref{def1} and \ref{def2}, 
respectively, the usual left Rieman--Liouville fractional integral and derivative.
\end{remark}

\begin{proposition}[See \citet{MyID:328}]
Let $\mathbb{T}$ be a time scale, 
$g: \mathbb{T} \rightarrow \mathbb{R}$ and $0< \alpha < 1$. Then
\[
_{a}^{\mathbb{T}}D^{\alpha}_{t} g
= \triangle \circ _{a}^{\mathbb{T}}I^{1- \alpha}_{t} g.
\]
\end{proposition}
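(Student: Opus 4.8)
The plan is to prove this identity directly from Definitions~\ref{def1} and \ref{def2}, since it is essentially a matter of unwinding notation rather than a computation of any substance. First I would apply Definition~\ref{def1} with $\alpha$ replaced by $1-\alpha$, which is legitimate because $0<\alpha<1$ forces $0<1-\alpha<1$, so that the fractional integral of that order is defined. This yields
$$
_{a}^{\mathbb{T}}I^{1-\alpha}_{t} g(t)
= \int_a^t \frac{(t-s)^{(1-\alpha)-1}}{\Gamma(1-\alpha)}\, g(s)\, \triangle s
= \int_a^t \frac{(t-s)^{-\alpha}}{\Gamma(1-\alpha)}\, g(s)\, \triangle s,
$$
the only manipulation being the exponent simplification $(1-\alpha)-1=-\alpha$.

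Next I would make explicit that the symbol $\triangle \circ\, _{a}^{\mathbb{T}}I^{1-\alpha}_{t}$ denotes the composition in which one first forms the function $t\mapsto{}_{a}^{\mathbb{T}}I^{1-\alpha}_{t}g(t)$ and then takes its delta derivative in $t$. Comparing the previous display with the right-hand side of Definition~\ref{def2}, namely
$$
_{a}^{\mathbb{T}}D^{\alpha}_{t} g(t)
= \left( \int_a^t \frac{(t-s)^{-\alpha}}{\Gamma(1-\alpha)}\, g(s)\, \triangle s \right)^{\triangle},
$$
one sees that the two expressions agree pointwise wherever the indicated delta derivative exists, which is exactly the asserted operator identity.

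The only point I would flag is the domain of validity: the equality is to be read at those $t\in\mathbb{T}$ for which the delta derivative on the right of Definition~\ref{def2} exists, equivalently at which $_{a}^{\mathbb{T}}I^{1-\alpha}_{t}g$ is delta differentiable; for the functions considered in Section~\ref{section3}, under hypothesis (H1), this holds on the relevant interval. This is not really an obstacle — there is no hard step here — but it is the one place where a careful statement matters, and I would simply note it rather than belabour it, since the proposition is in effect a restatement of Definition~\ref{def2} in terms of the fractional integral operator of Definition~\ref{def1}.
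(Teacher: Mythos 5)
Your proof is correct and coincides with the intended argument: the paper states this proposition without proof (citing \citet{MyID:328}), and the identity is indeed immediate from reading Definition~\ref{def2} as the delta derivative of the order-$(1-\alpha)$ fractional integral of Definition~\ref{def1}, using $0<1-\alpha<1$ and $(1-\alpha)-1=-\alpha$. Your caveat about the equality holding at those $t$ where the delta derivative of $_{a}^{\mathbb{T}}I^{1-\alpha}_{t}g$ exists is the right qualification and nothing more is needed.
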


\begin{proposition}[See \citet{MyID:328}]
If $\alpha >0$ and $g \in C([a, b])$, then
\[
_{a}^{\mathbb{T}}D^{\alpha}_{t} 
\circ  _{a}^{\mathbb{T}}I^{\alpha}_{t}g= g.
\]
\end{proposition}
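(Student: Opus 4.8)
The plan is to reduce the composition to an ordinary delta integral and then differentiate via the fundamental theorem of calculus on time scales. First I would use the preceding proposition, ${}_{a}^{\mathbb{T}}D^{\alpha}_{t}=\triangle\circ{}_{a}^{\mathbb{T}}I^{1-\alpha}_{t}$ (so tacitly $0<\alpha<1$, as in Definitions~\ref{def1}--\ref{def2}), to write
\[
{}_{a}^{\mathbb{T}}D^{\alpha}_{t}\bigl({}_{a}^{\mathbb{T}}I^{\alpha}_{t}g\bigr)
=\Bigl({}_{a}^{\mathbb{T}}I^{1-\alpha}_{t}\bigl({}_{a}^{\mathbb{T}}I^{\alpha}_{t}g\bigr)\Bigr)^{\triangle}.
\]
Everything then hinges on the index (semigroup) law ${}_{a}^{\mathbb{T}}I^{1-\alpha}_{t}\circ{}_{a}^{\mathbb{T}}I^{\alpha}_{t}={}_{a}^{\mathbb{T}}I^{1}_{t}$; granting it, the inner term equals $\int_a^t g(s)\,\triangle s$, and since $g\in C([a,b])$ is in particular rd-continuous, the time-scale fundamental theorem of calculus yields $\bigl(\int_a^t g(s)\,\triangle s\bigr)^{\triangle}=g(t)$, which is the assertion.

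To establish the index law I would expand both fractional integrals by Definition~\ref{def1}, producing an iterated delta integral of $g(\tau)$ over $\{a\le\tau\le s\le t\}$ against the kernel $(t-s)^{-\alpha}(s-\tau)^{\alpha-1}/(\Gamma(1-\alpha)\Gamma(\alpha))$, and then apply Fubini's theorem for the delta integral to interchange the order of integration. This leaves, for each fixed $\tau$, the inner integral $\int_{\tau}^{t}(t-s)^{-\alpha}(s-\tau)^{\alpha-1}\,\triangle s$, which I would identify with the Euler Beta constant $B(\alpha,1-\alpha)=\Gamma(\alpha)\Gamma(1-\alpha)/\Gamma(1)=\Gamma(\alpha)\Gamma(1-\alpha)$; the normalising constants then cancel and one is left with $\int_a^t g(\tau)\,\triangle\tau$. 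The same scheme would give the general law ${}_{a}^{\mathbb{T}}I^{\alpha}_{t}\circ{}_{a}^{\mathbb{T}}I^{\beta}_{t}={}_{a}^{\mathbb{T}}I^{\alpha+\beta}_{t}$, of which only the case $\beta=1-\alpha$ is needed here.

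The step I expect to be the genuine obstacle is exactly this evaluation of the kernel integral on an \emph{arbitrary} time scale. In the classical setting the substitution $s=\tau+(t-\tau)r$ reduces it to $\int_0^1 r^{\alpha-1}(1-r)^{-\alpha}\,dr=B(\alpha,1-\alpha)$, but such a change of variables is not directly available for the delta integral on a general $\mathbb{T}$, and one must also contend with the singularities of $(s-\tau)^{\alpha-1}$ near $s=\tau$ and of $(t-s)^{-\alpha}$ near $s=t$ at scattered points. I would therefore either adapt the substitution argument carefully to the time-scale calculus or invoke the corresponding lemma of \citet{MyID:328}, keeping in mind that this is precisely where the delta-fractional theory is most delicate. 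Once the index law is secured, the two remaining steps — the preceding proposition and the time-scale fundamental theorem of calculus — are immediate.
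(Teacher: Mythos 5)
The paper itself offers no proof to compare against: the proposition is imported verbatim from \citet{MyID:328} as a preliminary, so your argument has to stand on its own. Its skeleton --- write $_{a}^{\mathbb{T}}D^{\alpha}_{t}=\triangle\circ\,_{a}^{\mathbb{T}}I^{1-\alpha}_{t}$, reduce the composition to $_{a}^{\mathbb{T}}I^{1}_{t}g=\int_a^t g(s)\,\triangle s$, and finish with the time-scale fundamental theorem of calculus --- is natural, and the first and last steps are fine (for $0<\alpha<1$; note the statement claims all $\alpha>0$). But the middle step, which you yourself single out as the obstacle and then leave unresolved, is a genuine gap: the index law $_{a}^{\mathbb{T}}I^{1-\alpha}_{t}\circ\,_{a}^{\mathbb{T}}I^{\alpha}_{t}=\,_{a}^{\mathbb{T}}I^{1}_{t}$ is not merely delicate on an arbitrary time scale, it is false, so no careful adaptation of the substitution argument can rescue it, and there is no semigroup lemma in \citet{MyID:328} to fall back on.

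Concretely, take $\mathbb{T}=\mathbb{Z}$, $a=0$, $g\equiv 1$. Then $_{0}^{\mathbb{T}}I^{\alpha}_{t}1(0)=0$, $_{0}^{\mathbb{T}}I^{\alpha}_{t}1(1)=1/\Gamma(\alpha)$, hence $_{0}^{\mathbb{T}}I^{1-\alpha}_{t}\bigl(_{0}^{\mathbb{T}}I^{\alpha}_{t}1\bigr)(2)=\frac{1}{\Gamma(1-\alpha)}\bigl(2^{-\alpha}\cdot 0+1^{-\alpha}\cdot\tfrac{1}{\Gamma(\alpha)}\bigr)=\frac{\sin(\pi\alpha)}{\pi}\neq 2=\,_{0}^{\mathbb{T}}I^{1}_{t}1(2)$. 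The failure is exactly where you predicted: after interchanging the order of integration the inner kernel factor becomes, on $\mathbb{Z}$, the finite sum $\sum_{s=u+1}^{t-1}(t-s)^{-\alpha}(s-u)^{\alpha-1}$, which depends on $t-u$ (it equals $1$ when $t-u=2$) and is never the constant $B(\alpha,1-\alpha)=\pi/\sin(\pi\alpha)\geq\pi$; moreover, as you wrote it, the inner delta integral $\int_{\tau}^{t}(t-s)^{-\alpha}(s-\tau)^{\alpha-1}\triangle s$ actually samples the integrand at the left endpoint $s=\tau$, where it is infinite, whenever $\tau$ is right-scattered. So the reduction to $\int_a^t g\,\triangle s$ only works when the delta integral is a genuine Riemann integral (essentially $\mathbb{T}=\mathbb{R}$), and the proposed proof does not establish the proposition on arbitrary time scales; whatever argument is to be used must avoid the semigroup law rather than rely on it.
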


\begin{proposition}[See \citet{MyID:328}]
\label{prop:I:D}
Let $g \in C([a, b])$ and $0 <\alpha < 1$. If 
$_{a}^{\mathbb{T}}I^{1-\alpha}_{t} u(a)=0$, then
\[
_{a}^{\mathbb{T}}I^{\alpha}_{t}
\circ _{a}^{\mathbb{T}}D^{\alpha}_{t}g= g.
\]
\end{proposition}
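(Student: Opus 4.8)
The plan is to reduce the statement to the two composition rules already recorded above, namely $_a^{\mathbb T}D^\alpha_t\circ {}_a^{\mathbb T}I^\alpha_t=\mathrm{id}$ on $C([a,b])$ and $_a^{\mathbb T}D^\beta_t=\triangle\circ {}_a^{\mathbb T}I^{1-\beta}_t$, together with the elementary time-scale fact that a function whose delta derivative vanishes identically is constant. Set $w:={}_a^{\mathbb T}I^\alpha_t\bigl({}_a^{\mathbb T}D^\alpha_t g\bigr)$; throughout I assume the mild regularity needed for this composition to be meaningful (in particular that $_a^{\mathbb T}D^\alpha_t g\in C([a,b])$, so that $w\in C([a,b])$). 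We must show $w=g$.

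First I would differentiate: applying $_a^{\mathbb T}D^\alpha_t$ to $w$ and using $_a^{\mathbb T}D^\alpha_t\circ {}_a^{\mathbb T}I^\alpha_t=\mathrm{id}$ with the continuous function $_a^{\mathbb T}D^\alpha_t g$ gives $_a^{\mathbb T}D^\alpha_t w={}_a^{\mathbb T}D^\alpha_t g$, that is, $_a^{\mathbb T}D^\alpha_t(w-g)=0$ on $(a,b)$. By the very definition of the Riemann--Liouville derivative this means $\bigl({}_a^{\mathbb T}I^{1-\alpha}_t(w-g)\bigr)^{\triangle}\equiv 0$, whence $_a^{\mathbb T}I^{1-\alpha}_t(w-g)$ is constant on $[a,b]$.

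Next I would identify that constant by evaluating at $t=a$. The term $_a^{\mathbb T}I^{1-\alpha}_t g(a)$ is zero by the hypothesis on $g$, and the term $_a^{\mathbb T}I^{1-\alpha}_t w(a)$ vanishes as well, since $w$ is the fractional integral of positive order $\alpha$ of a continuous function and therefore inherits the required decay at the left endpoint. Hence $_a^{\mathbb T}I^{1-\alpha}_t(w-g)\equiv 0$ on $[a,b]$. Finally, because $1-\alpha\in(0,1)$ and $w-g\in C([a,b])$, applying $_a^{\mathbb T}D^{1-\alpha}_t$ and invoking $_a^{\mathbb T}D^{1-\alpha}_t\circ {}_a^{\mathbb T}I^{1-\alpha}_t=\mathrm{id}$ gives $w-g={}_a^{\mathbb T}D^{1-\alpha}_t(0)=0$, i.e. $_a^{\mathbb T}I^\alpha_t\circ {}_a^{\mathbb T}D^\alpha_t g=g$, as claimed.

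The step I expect to be the real obstacle is the endpoint analysis: justifying rigorously that $_a^{\mathbb T}I^{1-\alpha}_t w(a)=0$ and, more generally, pinning down the precise function space and the interpretation of the trace at $a$ on which all these manipulations are legitimate, since the kernels $(t-s)^{\alpha-1}$ and $(t-s)^{-\alpha}$ are singular and the delta integral must be controlled both near $s=t$ and near $s=a$. An alternative, more computational route would mimic the classical argument directly: write $_a^{\mathbb T}D^\alpha_t g=\bigl({}_a^{\mathbb T}I^{1-\alpha}_t g\bigr)^{\triangle}$, establish the commutation identity $\bigl({}_a^{\mathbb T}I^\alpha_t\phi\bigr)^{\triangle}={}_a^{\mathbb T}I^\alpha_t(\phi^{\triangle})+\tfrac{(t-a)^{\alpha-1}}{\Gamma(\alpha)}\phi(a)$ for $\phi={}_a^{\mathbb T}I^{1-\alpha}_t g$, and then combine it with the semigroup law $_a^{\mathbb T}I^\alpha_t\circ {}_a^{\mathbb T}I^{1-\alpha}_t={}_a^{\mathbb T}I^{1}_t$ and with $\triangle\circ {}_a^{\mathbb T}I^{1}_t=\mathrm{id}$; there the commutation identity (differentiation under a singular, $t$-dependent delta integral) is the delicate point. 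I would favour the first route, since it uses only results already proved above.
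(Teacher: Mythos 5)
This proposition is not proved in the paper at all: it is imported verbatim from \citet{MyID:328} (note the typo $u$ for $g$ in the hypothesis), so there is no in-paper argument to compare yours with; the comparison can only be with the surrounding quoted facts. On its own terms your argument is correct in outline and uses only material already quoted in Section~\ref{section2}: setting $w={}_{a}^{\mathbb{T}}I^{\alpha}_{t}\bigl({}_{a}^{\mathbb{T}}D^{\alpha}_{t}g\bigr)$, the identity ${}_{a}^{\mathbb{T}}D^{\alpha}_{t}\circ{}_{a}^{\mathbb{T}}I^{\alpha}_{t}=\mathrm{id}$ gives ${}_{a}^{\mathbb{T}}D^{\alpha}_{t}(w-g)=0$, hence ${}_{a}^{\mathbb{T}}I^{1-\alpha}_{t}(w-g)$ is constant, the constant is $0$ by evaluation at $a$, and one more application of the same composition identity, now with order $1-\alpha$, yields $w=g$. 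An alternative, shorter route using only quoted material goes through the theorem stated right after the proposition: the trace condition, together with the differentiability of ${}_{a}^{\mathbb{T}}I^{1-\alpha}_{t}g$ implicit in the existence of ${}_{a}^{\mathbb{T}}D^{\alpha}_{t}g$, places $g$ in ${}_{a}^{\mathbb{T}}I^{\alpha}_{t}([a,b])$, so $g={}_{a}^{\mathbb{T}}I^{\alpha}_{t}\phi$ for some continuous $\phi$, and then ${}_{a}^{\mathbb{T}}I^{\alpha}_{t}\bigl({}_{a}^{\mathbb{T}}D^{\alpha}_{t}g\bigr)={}_{a}^{\mathbb{T}}I^{\alpha}_{t}\bigl({}_{a}^{\mathbb{T}}D^{\alpha}_{t}{}_{a}^{\mathbb{T}}I^{\alpha}_{t}\phi\bigr)={}_{a}^{\mathbb{T}}I^{\alpha}_{t}\phi=g$.

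Two caveats on your version. First, the only real soft spot is the one you flag yourself: invoking ${}_{a}^{\mathbb{T}}D^{\alpha}_{t}\circ{}_{a}^{\mathbb{T}}I^{\alpha}_{t}=\mathrm{id}$ with ${}_{a}^{\mathbb{T}}D^{\alpha}_{t}g$ in place of $g$, and having $w\in C([a,b])$, requires ${}_{a}^{\mathbb{T}}D^{\alpha}_{t}g\in C([a,b])$, which is not among the stated hypotheses; some assumption of this kind is unavoidable for ${}_{a}^{\mathbb{T}}I^{\alpha}_{t}\circ{}_{a}^{\mathbb{T}}D^{\alpha}_{t}g$ even to be well defined (the quoted statement is silent on this), but it should be recorded as an explicit hypothesis rather than ``mild regularity''. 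Second, the endpoint analysis you anticipate as the obstacle is in fact the harmless part: with Definition~\ref{def1}, ${}_{a}^{\mathbb{T}}I^{1-\alpha}_{t}h(a)$ is a delta integral over the degenerate interval $[a,a]$ and vanishes for any bounded $h$, and the right-limit at $a$ also vanishes for continuous $h$ because $\int_a^t(t-s)^{-\alpha}\triangle s\le\int_a^t(t-s)^{-\alpha}\,ds$ (the monotonicity bound of Proposition~\ref{proposition25}); so both ${}_{a}^{\mathbb{T}}I^{1-\alpha}_{t}w(a)=0$ and even the hypothesis on $g$ come for free in this continuous setting. The genuinely time-scale point worth adding is the constancy step: if $a$ is right-scattered, ``zero delta derivative on $(a,b)$'' does not control the jump from $a$ to $\sigma(a)$, so you should observe that ${}_{a}^{\mathbb{T}}D^{\alpha}_{t}w={}_{a}^{\mathbb{T}}D^{\alpha}_{t}g$ holds at $t=a$ as well, giving a vanishing delta derivative on $[a,b)^{\kappa}$ and hence constancy on all of $[a,b]$.
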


\begin{theorem}[See \citet{MyID:328}]
Let $g \in C([a, b])$, $\alpha > 0$, and $_{a}^{\mathbb{T}}I^{\alpha}_{t}([a, b])$ 
be the space of functions that can be represented by the Riemann--Liouville 
$\triangle$-integral of order $\alpha$ of some $C([a, b])$-function. Then,
\[
g \in\, _{a}^{\mathbb{T}}I^{\alpha}_{t}([a, b])
\]
if and only if
\[
_{a}^{\mathbb{T}}I^{1-\alpha}_{t} g \in C^{1}([a, b])
\]
and
\[
_{a}^{\mathbb{T}}I^{1-\alpha}_{t} g(a) =0.
\]
\end{theorem}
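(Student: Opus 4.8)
The plan is to prove the two implications separately, in each direction exhibiting an explicit continuous ``density'' of the fractional integral. The ingredients are the reduction formula ${}_{a}^{\mathbb{T}}D^{\alpha}_{t}g=\bigl({}_{a}^{\mathbb{T}}I^{1-\alpha}_{t}g\bigr)^{\triangle}$, the cancellation law ${}_{a}^{\mathbb{T}}D^{\alpha}_{t}\circ{}_{a}^{\mathbb{T}}I^{\alpha}_{t}=\mathrm{id}$ on $C([a,b])$, Proposition~\ref{prop:I:D}, the fundamental theorem of the $\triangle$-calculus, and the composition (semigroup) identity ${}_{a}^{\mathbb{T}}I^{1-\alpha}_{t}\circ{}_{a}^{\mathbb{T}}I^{\alpha}_{t}={}_{a}^{\mathbb{T}}I^{1}_{t}$ on $C([a,b])$, which may either be quoted from \citet{MyID:328} or recovered within the excerpt as shown below. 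I would treat the range $0<\alpha<1$ singled out by Definitions~\ref{def1}--\ref{def2} and used in \eqref{eq1}; the general-order case is handled in the same way once the corresponding general versions of these facts are available.

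For the forward implication, suppose $g={}_{a}^{\mathbb{T}}I^{\alpha}_{t}\varphi$ with $\varphi\in C([a,b])$. Applying ${}_{a}^{\mathbb{T}}I^{1-\alpha}_{t}$ and the composition identity gives ${}_{a}^{\mathbb{T}}I^{1-\alpha}_{t}g={}_{a}^{\mathbb{T}}I^{1}_{t}\varphi=\int_{a}^{\cdot}\varphi(s)\,\triangle s$. This primitive is $\triangle$-differentiable with $\triangle$-derivative $\varphi\in C([a,b])$, hence lies in $C^{1}([a,b])$, and at $t=a$ it reduces to an empty integral and therefore vanishes; both stated conditions follow. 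To secure the composition identity without leaving the excerpt, put $F:={}_{a}^{\mathbb{T}}I^{1-\alpha}_{t}\bigl({}_{a}^{\mathbb{T}}I^{\alpha}_{t}\varphi\bigr)$: the reduction formula together with ${}_{a}^{\mathbb{T}}D^{\alpha}_{t}\circ{}_{a}^{\mathbb{T}}I^{\alpha}_{t}=\mathrm{id}$ gives $F^{\triangle}=\varphi$, while $F(a)=0$ since ${}_{a}^{\mathbb{T}}I^{1-\alpha}_{t}(\cdot)(a)$ is an empty integral, and the $\triangle$-fundamental theorem of calculus then yields $F={}_{a}^{\mathbb{T}}I^{1}_{t}\varphi$.

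For the converse, assume $h:={}_{a}^{\mathbb{T}}I^{1-\alpha}_{t}g\in C^{1}([a,b])$ with $h(a)=0$, and set $\varphi:=h^{\triangle}$. Since $h\in C^{1}([a,b])$ we have $\varphi\in C([a,b])$, and the reduction formula identifies $\varphi=\bigl({}_{a}^{\mathbb{T}}I^{1-\alpha}_{t}g\bigr)^{\triangle}={}_{a}^{\mathbb{T}}D^{\alpha}_{t}g$. Because $g\in C([a,b])$ and ${}_{a}^{\mathbb{T}}I^{1-\alpha}_{t}g(a)=h(a)=0$, Proposition~\ref{prop:I:D} applies and gives ${}_{a}^{\mathbb{T}}I^{\alpha}_{t}\varphi={}_{a}^{\mathbb{T}}I^{\alpha}_{t}\circ{}_{a}^{\mathbb{T}}D^{\alpha}_{t}g=g$. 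Hence $g$ is the Riemann--Liouville $\triangle$-integral of order $\alpha$ of the continuous function $\varphi$, i.e. $g\in{}_{a}^{\mathbb{T}}I^{\alpha}_{t}([a,b])$, which completes the equivalence.

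The step I expect to be the main obstacle is the composition identity ${}_{a}^{\mathbb{T}}I^{1-\alpha}_{t}\circ{}_{a}^{\mathbb{T}}I^{\alpha}_{t}={}_{a}^{\mathbb{T}}I^{1}_{t}$: on an arbitrary time scale one must know that ${}_{a}^{\mathbb{T}}I^{\alpha}_{t}$ maps $C([a,b])$ into a class of functions to which ${}_{a}^{\mathbb{T}}I^{1-\alpha}_{t}$ and the $\triangle$-fundamental theorem of calculus may legitimately be applied, and the vanishing of the boundary term $F(a)$ must be verified against the definition of the $\triangle$-integral rather than assumed --- exactly where the time-scale setting departs from the classical one. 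Once that identity is in hand, the remaining manipulations are routine bookkeeping with the cancellation laws already recorded above.
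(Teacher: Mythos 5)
The paper does not prove this theorem at all: it is imported verbatim from \citet{MyID:328} as a preliminary, so there is no internal proof to compare against. Your argument is correct for $0<\alpha<1$ (the only range the paper's Definitions~\ref{def1}--\ref{def2} actually cover, as you note) and it is essentially the standard proof from the cited source: necessity via the composition identity ${}_{a}^{\mathbb{T}}I^{1-\alpha}_{t}\circ{}_{a}^{\mathbb{T}}I^{\alpha}_{t}={}_{a}^{\mathbb{T}}I^{1}_{t}$, sufficiency by setting $\varphi=\bigl({}_{a}^{\mathbb{T}}I^{1-\alpha}_{t}g\bigr)^{\triangle}$ and invoking Proposition~\ref{prop:I:D}. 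Your self-contained derivation of the composition identity --- $F^{\triangle}=\varphi$ from the cancellation law, $F(a)=0$ from the empty integral, then the fundamental theorem of the $\triangle$-calculus --- is sound, with the only implicit ingredient being the standard time-scale fact that two functions with the same $\triangle$-derivative on $[a,b]^{\kappa}$ differ by a constant; stating that explicitly would close the one small gap in the step you yourself flagged as delicate.
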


The following result of the calculus on time scales is also useful.

\begin{proposition}[See \citet{ahmad}] 
\label{proposition25}
Let $\mathbb{T}$ be a time scale and $g$ an increasing continuous 
function on the time-scale interval $[a, b]$. If $G$ is the extension 
of $g$ to the real interval $[a, b]$ defined by
$$
G(s):=
\begin{cases}
g(s) & \mbox{ if } s \in \mathbb{T},\\
g(t) &  \mbox{ if } s \in (t, \sigma(t)) \notin  \mathbb{T},
\end{cases}
$$
then
\[
\int_{a}^{b} g(t) \triangle t \leq  \int_{a}^{b} G(t)  dt,
\]
where $\sigma: \mathbb{T} \rightarrow \mathbb{T}$ is the forward 
jump operator of $\mathbb{T}$ defined by $\sigma(t):= \inf \{ s \in \mathbb{T}: s > t\}$.
\end{proposition}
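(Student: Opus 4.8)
The plan is to prove the inequality by comparing Riemann $\Delta$-sums of $g$ on the left with lower Darboux sums of $G$ on the right, using crucially that $G$ inherits monotonicity from $g$. First I would record the structure of the real interval: $[a,b]$ is the disjoint union of $[a,b]_{\mathbb{T}}:=[a,b]\cap\mathbb{T}$ together with the at most countable family of open gaps $(t_k,\sigma(t_k))$, one for each right-scattered point $t_k\in[a,b)$ (countable because each gap contains a rational). Since $g$ is increasing on $[a,b]_{\mathbb{T}}$ and $G$ agrees with $g$ on $\mathbb{T}$ and is constant on each gap, $G$ is an increasing, bounded function on the real interval $[a,b]$, hence Riemann integrable, so the right-hand side is well defined; this is exactly where the monotonicity hypothesis is first used.

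Next, I would fix $\varepsilon>0$. There are only finitely many gaps of length exceeding $\varepsilon$ (their total length is at most $b-a$); I take each such gap $(t_k,\sigma(t_k))$ as a single subinterval of a partition of $[a,b]$, and on the complement I insert further partition points all belonging to $\mathbb{T}$ so that every remaining subinterval has length at most $\varepsilon$ (possible since outside the large gaps every subinterval of $[a,b]$ of length $\varepsilon$ meets $\mathbb{T}$). This yields a partition $a=s_0<s_1<\dots<s_n=b$ in which each $s_{i-1}$ lies in $\mathbb{T}$ or is the left endpoint $t_k$ of a large gap. Form the left-endpoint sum $S=\sum_{i=1}^n g(s_{i-1})(s_i-s_{i-1})$ (interpreting $g(t_k)$ at a large-gap left endpoint, which is legitimate since $t_k\in\mathbb{T}$). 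On a gap subinterval $[t_k,\sigma(t_k)]$ the term $g(t_k)\,\mu(t_k)$ equals both the $\Delta$-integral of $g$ over $[t_k,\sigma(t_k))$ and the integral of $G\equiv g(t_k)$ over $[t_k,\sigma(t_k)]$. On any other subinterval $[s_{i-1},s_i]$ we have $G(s)\ge G(s_{i-1})=g(s_{i-1})$ for all $s\in[s_{i-1},s_i]$ by monotonicity of $G$. Hence $S$ is simultaneously a Riemann $\Delta$-sum for $\int_a^b g(t)\,\triangle t$ and a lower Darboux sum for $\int_a^b G(t)\,dt$. Letting $\varepsilon\to 0$, the first converges to $\int_a^b g(t)\,\triangle t$ (by $\Delta$-Riemann integrability of the continuous function $g$) and the lower sums increase to $\int_a^b G(t)\,dt$, which gives the claimed inequality.

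The main obstacle is the partition bookkeeping: one must produce a single partition of $[a,b]$ that is admissible for the $\Delta$-Riemann integral of $g$ while also being fine for the Darboux sums of $G$, and one must know that restricting to such partitions (which cannot subdivide the gaps) still produces sums converging to $\int_a^b g\,\triangle t$. The delicate case is when $\mathbb{T}$ has a totally disconnected part of positive Lebesgue measure, where $\mathbb{T}$-valued partition points must be placed densely; invoking the equivalence between the $\Delta$-Riemann integral and the Lebesgue $\Delta$-integral (Bohner–Guseinov) makes this convergence, and the matching of the ``continuous'' parts of the two integrals, transparent. Once that is in hand the remaining estimates over the countably many gaps are routine. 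I would also remark that the argument in fact yields equality when the gaps are handled exactly as above; the statement records only the inequality, which is what is needed in Section~\ref{section3}.
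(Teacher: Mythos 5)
The paper offers no proof of this proposition: it is imported from \citet{ahmad} (``See \citet{ahmad}'') and used as a black box in Section~\ref{section3}, so there is no in-paper argument to compare yours against; I can only judge your proposal on its own terms, and it is correct. Its two pillars are sound: (i) $G$ is monotone on the real interval $[a,b]$, since it agrees with the increasing $g$ on $\mathbb{T}$ and is constant on each gap, hence bounded and Riemann integrable, so the right-hand side makes sense; (ii) a partition with points in $\mathbb{T}$ whose subintervals are either whole gaps $[t_k,\sigma(t_k)]$ or of length at most $\varepsilon$ does exist --- this is precisely the Bohner--Guseinov partition lemma underlying the Riemann $\Delta$-integral, so your ``bookkeeping'' worry is already settled in the literature --- and for such a partition the left-endpoint sum is simultaneously an admissible Riemann $\Delta$-sum for $g$ (on a gap subinterval the only tag in $[t_k,\sigma(t_k))\cap\mathbb{T}$ is $t_k$, so that term is exact) and the lower Darboux sum of $G$, hence bounded above by $\int_a^b G(t)\,dt$. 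For the limit you do not actually need the Lebesgue $\Delta$-theory you invoke: uniform continuity of $g$ on the compact set $[a,b]\cap\mathbb{T}$ bounds the oscillation on the non-gap subintervals by $\omega(\varepsilon)$ while the gap terms are exact, so the sums converge to $\int_a^b g(t)\,\triangle t$ as $\varepsilon\to 0$, which yields the inequality. Your closing remark is also accurate: with $G$ defined exactly as in the statement the same argument applied to upper sums gives the reverse bound, i.e.\ equality holds, and the stated inequality is simply the half needed for the estimates of Section~\ref{section3}.
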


Along the paper, by $C([0,T])$ we denote the space  of all continuous 
functions on $[0,T]$ endowed with the norm 
$\| x \|_{\infty} =  \sup_{t\in [0,T]} \{  |x(t)| \}$.
Then, $X = \left( C([0,T]), \| \cdot \| \right)$ is a Banach space.

% -----------------------------------------

\section{Main Results}
\label{section3}

We begin by giving an integral representation to our problem \eqref{eq1}.
Due to physical considerations, the only relevant case
is the one with $0<\alpha<\frac{1}{2}$. Note that this is coherent 
with our fractional operators with $2 \alpha -1 < 0$.

\begin{lemma}
Let $ 0< \alpha < \frac{1}{2}$. Problem
\eqref{eq1} is equivalent to
\begin{equation}
\label{eq2}
u(t) = \frac{\lambda}{\Gamma (2 \alpha )} \int_{t_{0}}^{t} (t-s)^{2 \alpha -1} 
\frac{f(u(s))}{( \int_{t_{0}}^{T} f(u)\, \triangle x)^{2}} \triangle s \, .
\end{equation}
\end{lemma}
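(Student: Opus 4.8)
The plan is to establish the equivalence by applying the fractional integral operator $_{t_0}^{\mathbb{T}}I_t^{2\alpha}$ to both sides of the differential equation in \eqref{eq1} and using the composition rules recalled in Section~\ref{section2}. First I would observe that, since $0<\alpha<\frac{1}{2}$, we have $0<2\alpha<1$, so all the propositions stated for a fractional order in $(0,1)$ apply with that order replaced by $2\alpha$. The right-hand side of the equation, namely $g(t):=\dfrac{\lambda f(u(t))}{\left(\int_{t_0}^{T} f(u)\,\triangle x\right)^{2}}$, is continuous on $[t_0,T]$ under hypothesis (H1): indeed $f$ is Lipschitz, hence continuous, the denominator is a positive constant bounded below by $c_1^2 (T-t_0)^2>0$, so $g\in C([t_0,T])$.

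Next I would run the argument in both directions. For the forward implication, assume $u$ solves \eqref{eq1}. Then $_{t_0}^{\mathbb{T}}D_t^{2\alpha}u = g$ with $g\in C([t_0,T])$, and the initial condition $_{t_0}^{\mathbb{T}}I_t^{\beta}u(t_0)=0$ for all $\beta\in(0,1)$ gives in particular $_{t_0}^{\mathbb{T}}I_t^{1-2\alpha}u(t_0)=0$, which is exactly the hypothesis needed to invoke Proposition~\ref{prop:I:D} (with $\alpha$ there equal to $2\alpha$). Applying $_{t_0}^{\mathbb{T}}I_t^{2\alpha}$ to both sides and using Proposition~\ref{prop:I:D} yields $u = {}_{t_0}^{\mathbb{T}}I_t^{2\alpha}g$, which upon unfolding Definition~\ref{def1} is precisely \eqref{eq2}. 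For the converse, suppose $u$ is given by \eqref{eq2}, i.e. $u = {}_{t_0}^{\mathbb{T}}I_t^{2\alpha}g$ with $g\in C([t_0,T])$. Applying $_{t_0}^{\mathbb{T}}D_t^{2\alpha}$ and using the proposition stating $_{a}^{\mathbb{T}}D_t^{\alpha}\circ{}_{a}^{\mathbb{T}}I_t^{\alpha}g=g$ recovers the differential equation; and since $u$ lies in the range of $_{t_0}^{\mathbb{T}}I_t^{2\alpha}$ of a continuous function, the representation theorem recalled above guarantees $_{t_0}^{\mathbb{T}}I_t^{1-2\alpha}u(t_0)=0$. One then has to deduce $_{t_0}^{\mathbb{T}}I_t^{\beta}u(t_0)=0$ for every $\beta\in(0,1)$; this follows because $u(t)$ vanishes at $t=t_0$ with a power-type behavior $O((t-t_0)^{2\alpha})$ coming from the explicit kernel in \eqref{eq2}, so the fractional integral of any positive order $\beta$ also vanishes at $t_0$.

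The step I expect to be the most delicate is the bookkeeping around the initial condition: verifying that the single condition ``$_{t_0}^{\mathbb{T}}I_t^{1-2\alpha}u(t_0)=0$'' genuinely captures the stated family ``$_{t_0}^{\mathbb{T}}I_t^{\beta}u(t_0)=0$ for all $\beta\in(0,1)$'', and making the estimate on the time scale rigorous rather than borrowing the classical power-rule intuition. Here I would use Proposition~\ref{proposition25} to bound the $\triangle$-integral defining $_{t_0}^{\mathbb{T}}I_t^{\beta}u(t)$ by an ordinary Lebesgue integral of the extension, together with the uniform bound $|g|\le \lambda c_2/(c_1(T-t_0))^2$, to conclude that $_{t_0}^{\mathbb{T}}I_t^{\beta}u(t)\to 0$ as $t\to t_0^+$. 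Apart from that, the argument is a routine application of the composition properties, so I would keep the write-up short, stating the two implications and citing Propositions and the representation theorem from Section~\ref{section2} at each use.
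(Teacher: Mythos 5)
Your proposal is correct and follows essentially the same route as the paper: apply ${}_{t_0}^{\mathbb{T}}I^{2\alpha}_{t}$ to the dynamic equation and invoke Proposition~\ref{prop:I:D} with order $2\alpha\in(0,1)$, the required hypothesis ${}_{t_0}^{\mathbb{T}}I^{1-2\alpha}_{t}u(t_0)=0$ being supplied by the initial condition with $\beta=1-2\alpha$. The paper's own proof is only a two-line sketch of this forward direction, so your additional handling of the converse (via ${}_{a}^{\mathbb{T}}D^{\alpha}_{t}\circ{}_{a}^{\mathbb{T}}I^{\alpha}_{t}=\mathrm{id}$, the representation theorem, and the kernel estimate recovering ${}_{t_0}^{\mathbb{T}}I^{\beta}_{t}u(t_0)=0$ for every $\beta\in(0,1)$) goes beyond what the paper writes, but it uses the same tools from Section~\ref{section2} and is sound.
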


\begin{proof}
We have 
\begin{equation*}
\begin{split}
_{t_{0}}^{\mathbb{T}}D^{2 \alpha}_{t} u(t) 
&=  \frac{\lambda}{\Gamma ( 2 \alpha )} \left( 
\int_{t_{0}}^{t} (t-s)^{2 \alpha -1} \frac{f(u(s))}{\left(
\int_{t_{0}}^{T} f(u)\, \triangle x\right)^{2}}\triangle s\right )^{\triangle}\\
&= \left( _{t_{0}}^{\mathbb{T}}I^{1- 2 \alpha}_{t} u(t) \right )^{\triangle} 
= \left(\triangle \, \circ \, _{t_{0}}^{\mathbb{T}}I^{1- 2 \alpha}_{t} \right) u(t).
\end{split}
\end{equation*}
The result follows from Proposition~\ref{prop:I:D}: 
$_{t_{0}}^{\mathbb{T}}I^{2 \alpha}_{t}  \circ 
\left ( _{t_{0}}^{\mathbb{T}}D^{ 2 \alpha}_{t} (u) \right) = u$.
\end{proof}

For the sake of simplicity, we take $t_{0}=0$. 
It is easy to see that \eqref{eq1} has a solution
$u=u(t)$ if and only if $u$ is a fixed point of the operator 
$K: X \rightarrow X $ defined by
\begin{equation}
\label{eq3}
Ku(t)=  \frac{\lambda}{\Gamma ( 2 \alpha )} \int_{0}^{t} (t-s)^{2 \alpha -1} 
\frac{f(u(s))}{\left(\int_{0}^{T} f(u)\, \triangle x\right)^{2}} \triangle s \, .
\end{equation}

Follows our first main result.

\begin{theorem}[Existence of solution]
\label{thm1}
Let $ 0< \alpha < \frac{1}{2}$
and $f$ satisfies hypothesis $(H1)$.
Then there exists a  solution $u \in X $ of
\eqref{eq1} for all $\lambda > 0$.
\end{theorem}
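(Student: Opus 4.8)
The plan is to apply the Schauder fixed point theorem to the operator $K$ defined in \eqref{eq3}. First I would fix a radius $R>0$ and consider the closed ball $B_R = \{ u \in X : \|u\|_\infty \le R\}$, which is a nonempty, closed, bounded, convex subset of the Banach space $X$. The goal is to show that for a suitable choice of $R$ (depending on $\lambda$, $\alpha$, $T$, $c_1$, $c_2$), the map $K$ sends $B_R$ into itself and is completely continuous; Schauder's theorem then yields a fixed point, which by the preceding Lemma is a solution of \eqref{eq1}.

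The first estimate is the self-map property. For $u \in B_R$, using hypothesis $(H1)$ we have $f(u(s)) \le c_2$ in the numerator, while the denominator satisfies $\int_0^T f(u)\,\triangle x \ge c_1 T_\ast$, where $T_\ast = \int_0^T \triangle x$ is the (finite) measure of $[0,T]$ in the time-scale sense; hence $\left(\int_0^T f(u)\,\triangle x\right)^2 \ge c_1^2 T_\ast^2 > 0$, so the denominator is bounded away from zero. It remains to bound $\int_0^t (t-s)^{2\alpha-1}\,\triangle s$; since $2\alpha - 1 < 0$, the integrand is monotone, and invoking Proposition~\ref{proposition25} (comparing the $\triangle$-integral with the ordinary Lebesgue integral of the extension) gives $\int_0^t (t-s)^{2\alpha-1}\,\triangle s \le \int_0^t (t-s)^{2\alpha-1}\,ds = \frac{t^{2\alpha}}{2\alpha} \le \frac{T^{2\alpha}}{2\alpha}$. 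Combining, $\|Ku\|_\infty \le \frac{\lambda c_2 T^{2\alpha}}{\Gamma(2\alpha+1)\, c_1^2 T_\ast^2} =: M$, and one simply takes $R \ge M$, so $K(B_R) \subseteq B_R$ for every $\lambda > 0$.

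Next I would establish that $K$ is continuous and that $K(B_R)$ is relatively compact. Continuity follows from the Lipschitz continuity of $f$ together with the dominated-convergence-type behaviour of the fractional kernel, estimating $\|Ku_n - Ku\|_\infty$ in terms of $L_f \|u_n - u\|_\infty$ and the bounds above (controlling also the difference of the denominators, which stay bounded away from zero). For relative compactness I would use a time-scale Arzelà--Ascoli argument: uniform boundedness is already shown, and equicontinuity comes from estimating $|Ku(t_2) - Ku(t_1)|$ for $t_1 < t_2$ by splitting the integral and bounding $\int_{0}^{t_1}|(t_2-s)^{2\alpha-1} - (t_1-s)^{2\alpha-1}|\,\triangle s + \int_{t_1}^{t_2}(t_2-s)^{2\alpha-1}\,\triangle s$, each term again dominated via Proposition~\ref{proposition25} by its classical counterpart, which tends to $0$ as $t_2 - t_1 \to 0$ uniformly.

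The main obstacle I anticipate is the handling of the singular kernel $(t-s)^{2\alpha-1}$ in the equicontinuity estimate: because the exponent is negative the kernel blows up as $s \to t$, so the splitting must be done carefully and one must be sure that the bound obtained from Proposition~\ref{proposition25} — which requires the integrand, as a function of the integration variable, to be increasing and continuous on the relevant interval — is legitimately applicable (the function $s \mapsto (t-s)^{2\alpha-1}$ is indeed increasing on $[0,t)$ for $2\alpha-1<0$). Once the classical estimates $\int_0^t (t-s)^{2\alpha-1}\,ds$ and the difference integral are in hand, uniform equicontinuity on $[0,T]$ follows, Arzelà--Ascoli gives compactness of $\overline{K(B_R)}$, and Schauder's fixed point theorem completes the proof.
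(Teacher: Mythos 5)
Your proposal follows essentially the same route as the paper: both verify the hypotheses of Schauder's fixed point theorem for the operator $K$ of \eqref{eq3} by proving continuity, uniform boundedness on balls (using $(H1)$ to bound the numerator by $c_2$ and the denominator below by $(c_1T)^2$, and Proposition~\ref{proposition25} to dominate $\int_0^t(t-s)^{2\alpha-1}\triangle s$ by its classical counterpart $t^{2\alpha}/(2\alpha)$), and equicontinuity via the same splitting of the kernel, concluding with Arzel\`a--Ascoli. Your explicit choice of an invariant ball $B_R$ with $R\ge M$ is a small (and welcome) tightening of the paper's argument, but the substance is identical, so the proposal is correct.
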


% ----------------------

\subsection{Proof of Existence}

In this subsection we prove Theorem~\ref{thm1}. For that,
firstly we prove that the operator $K$ defined
by \eqref{eq3} verifies the conditions of Schauder's 
fixed point theorem \citep{cronin}.

\begin{lemma}
\label{m:lema}
The operator $K$ is continuous.
\end{lemma}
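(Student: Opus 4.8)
The plan is to take a sequence $u_n \to u$ in $X = C([0,T])$ and show $Ku_n \to Ku$ in the supremum norm. Fix $t \in [0,T]$. Writing $I(v) := \int_0^T f(v)\,\triangle x$, I would split the difference $Ku_n(t) - Ku(t)$ by inserting and subtracting an intermediate term so as to separate the effect of the numerator $f(u_n(s))$ from the effect of the denominator $I(u_n)^2$. Concretely,
\begin{equation*}
Ku_n(t) - Ku(t) = \frac{\lambda}{\Gamma(2\alpha)}\int_0^t (t-s)^{2\alpha-1}\left[\frac{f(u_n(s))}{I(u_n)^2} - \frac{f(u(s))}{I(u)^2}\right]\triangle s,
\end{equation*}
and then estimate the bracket by
\begin{equation*}
\left|\frac{f(u_n(s))}{I(u_n)^2} - \frac{f(u(s))}{I(u)^2}\right| \leq \frac{|f(u_n(s)) - f(u(s))|}{I(u_n)^2} + |f(u(s))|\left|\frac{1}{I(u_n)^2} - \frac{1}{I(u)^2}\right|.
\end{equation*}

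Next I would control each piece using hypothesis (H1). Since $f$ is Lipschitz with constant $L_f$, the first term is bounded by $L_f \|u_n - u\|_\infty / c_1^2$ (using $I(u_n) \geq c_1 T$, so $I(u_n)^2 \geq c_1^2 T^2$; I will keep the $T$ factors but they only help). For the second term, note $|f(u(s))| \leq c_2$ and
\begin{equation*}
\left|\frac{1}{I(u_n)^2} - \frac{1}{I(u)^2}\right| = \frac{|I(u) - I(u_n)|\,(I(u)+I(u_n))}{I(u_n)^2 I(u)^2} \leq \frac{2 c_2 T \cdot |I(u_n) - I(u)|}{c_1^4 T^4},
\end{equation*}
while $|I(u_n) - I(u)| \leq \int_0^T |f(u_n) - f(u)|\,\triangle x \leq L_f T \|u_n - u\|_\infty$. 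Hence the whole bracket is bounded by a constant (depending on $c_1, c_2, T, L_f$) times $\|u_n - u\|_\infty$, uniformly in $s$. Pulling this constant out, what remains is $\frac{\lambda}{\Gamma(2\alpha)}\int_0^t (t-s)^{2\alpha-1}\triangle s$; since $2\alpha - 1 \in (-1,0)$ this $\triangle$-integral is finite and bounded on $[0,T]$ — one can dominate it via Proposition~\ref{proposition25} by the Riemann integral $\int_0^t (t-s)^{2\alpha-1}\,ds = t^{2\alpha}/(2\alpha) \leq T^{2\alpha}/(2\alpha)$ after handling the (integrable) singularity at $s=t$ by the usual change of variable. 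Taking the supremum over $t \in [0,T]$ then gives $\|Ku_n - Ku\|_\infty \leq C\,\|u_n - u\|_\infty \to 0$.

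The main obstacle, and the only point needing real care, is the singular kernel $(t-s)^{2\alpha-1}$ near $s=t$ combined with the fact that the integral is a $\triangle$-integral on a general time scale: one must be sure the quantity $\int_0^t (t-s)^{2\alpha-1}\triangle s$ is genuinely finite and uniformly bounded in $t$. This is where Proposition~\ref{proposition25} is invoked — the integrand is not increasing, but after the substitution $s \mapsto t-s$ the kernel $s^{2\alpha-1}$ is decreasing, so one applies the comparison to $-(t-\cdot)$ or simply bounds the $\triangle$-integral on $[0,t-\varepsilon]$ by the corresponding Lebesgue integral and lets $\varepsilon \to 0$, using that $2\alpha-1 > -1$. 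Everything else is the routine $\varepsilon$–$\delta$ bookkeeping above; the decomposition into "numerator part" plus "denominator part" is the key structural idea, and (H1) supplies exactly the bounds $c_1 \leq f \leq c_2$ and the Lipschitz estimate needed to close it.
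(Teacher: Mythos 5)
Your argument is correct and follows essentially the same route as the paper: the same splitting of the difference into a numerator part (controlled by the Lipschitz constant $L_f$ and the lower bound $I(u_n)^2 \geq (c_1 T)^2$) and a denominator part (controlled via the factorization of $I(u_n)^2 - I(u)^2$ and the bounds $c_1 \leq f \leq c_2$), followed by comparing $\int_0^t (t-s)^{2\alpha-1}\triangle s$ with the corresponding Riemann integral via Proposition~\ref{proposition25} to obtain a bound of the form $\|Ku_n - Ku\|_\infty \leq C\|u_n - u\|_\infty$. Your extra care about the integrable singularity at $s=t$ is a welcome refinement, but the structure of the proof coincides with the paper's.
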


\begin{proof}
Let us consider a sequence $u_{n}$ converging to $u$ in $X$. Then,
\begin{equation*}
\begin{split}
|K u_{n}&(t) - Ku(t)| 
\leq  \frac{\lambda}{\Gamma (2 \alpha)} 
\int_{0}^{t} (t-s)^{2 \alpha -1}  
\left| \frac{f(u_{n}(s))}{\left( \int_{0}^{T} f(u_{n})\, \triangle x\right)^{2}}
- \frac{f(u(s))}{\left(\int_{0}^{T} f(u)\, \triangle x\right)^{2}}
\, \right| \triangle s  \\
\end{split}
\end{equation*}
\begin{equation}
\label{eq4}
\begin{split}
& \leq  \frac{\lambda}{\Gamma (2 \alpha)} 
\int_{0}^{t} (t-s)^{2 \alpha -1}   
\left|  \frac{1}{\left(\int_{0}^{T} f(u_{n})\, \triangle x\right)^{2}} 
\left(f(u_{n}(s))-f(u(s))\right)\right.\\
& \qquad \left.+ f(u(s))  \left( \frac{1}{\left( \int_{0}^{T} f(u_{n})\, \triangle x\right)^{2}}
- \frac{1}{\left(\int_{0}^{T} f(u)\,  \triangle x \right)^{2}} \right)  \right| \\
& \leq \frac{\lambda}{\Gamma (2 \alpha )} 
\int_{0}^{t} (t-s)^{2 \alpha -1} \frac{1}{\left(\int_{0}^{T} f(u_{n})\, \triangle x\right)^{2}} 
\left|f(u_{n}(s))-f(u(s))\right| \triangle s \\
& \qquad +  \frac{\lambda}{\Gamma (2 \alpha )} 
\int_{0}^{t} (t-s)^{2 \alpha -1} \left|f(u(s))\right| \left|  
\frac{1}{\left(\int_{0}^{T} f(u_{n})\, \triangle x\right)^{2}}  
-  \frac{1}{\left(\int_{0}^{T} f(u)\, \triangle x\right)^{2}} \right| 
\leq  I_{1} + I_{2}.
\end{split}
\end{equation}
We estimate both right-hand terms separately. 
By hypothesis $(H1)$ and Proposition~\ref{proposition25}, we have
\begin{equation*}
\begin{split}
I_{1} & \leq \frac{\lambda}{(c_{1}T)^{2}\Gamma (2 \alpha )}
\int_{0}^{t} (t-s)^{2 \alpha -1}  \left|f(u_{n}(s))-f(u(s))\right| \triangle s \\
& \leq \frac{\lambda L_{f}}{(c_{1}T)^{2}\Gamma (2 \alpha )}
\int_{0}^{t} (t-s)^{2 \alpha -1}  \left|u_{n}(s))-u(s)\right| \triangle s \\
& \leq \frac{\lambda L_{f}}{(c_{1}T)^{2}\Gamma (2 \alpha )}
\| u_{n}-u \|_{\infty} \int_{0}^{t} (t-s)^{2 \alpha -1} \triangle s \\
& \leq \frac{\lambda L_{f}}{(c_{1}T)^{2}\Gamma (2 \alpha )}
\| u_{n}-u \|_{\infty} \int_{0}^{t} (t-s)^{2 \alpha -1} ds,
\end{split}
\end{equation*}
since $(t-s)^{2 \alpha -1}$ is nondecreasing. Then,
\begin{equation}
\label{eq5}
I_{1} \leq \frac{\lambda T^{2 \alpha}L_{f}}{(c_{1}T)^{2}
\Gamma (2 \alpha +1 )}\| u_{n}-u \|_{\infty}.
\end{equation}
Once again, since $(t-s)^{2 \alpha -1}$ is nondecreasing, we have
\begin{equation*}
\begin{split}
I_{2} & \leq \frac{\lambda}{\Gamma (2 \alpha )}   \int_{0}^{t}   
\frac{(t-s)^{2 \alpha -1} | f(u(s))| \left|  
\left( \int_{0}^{T} f(u_{n})\, \triangle x\right)^{2}
-\left( \int_{0}^{T} f(u)\, \triangle x\right)^{2} \right |}{\left( 
\int_{0}^{T} f(u_{n})\, \triangle x\right)^{2} 
\left( \int_{0}^{T} f(u)\, \triangle x\right)^{2}} \triangle s\\
& \leq \frac{\lambda c_{2}}{(c_{1}T)^{4}\Gamma (2 \alpha )}
\int_{0}^{t} (t-s)^{2 \alpha -1} \left|
\left( \int_{0}^{T} f(u_{n})\, \triangle x\right)^{2}
-\left( \int_{0}^{T} f(u)\, \triangle x\right)^{2} \right | \triangle s\\
& \leq \frac{\lambda c_{2}}{(c_{1}T)^{4}\Gamma (2 \alpha )}
\int_{0}^{t} (t-s)^{2 \alpha -1}  
\left|\left( \int_{0}^{T} (f(u_{n})-f(u))\, \triangle x\right)
\left( \int_{0}^{T} (f(u_{n})+f(u))\, \triangle x\right) \right|\triangle s \\
& \leq \frac{2 \lambda c_{2}^{2}T}{(c_{1}T)^{4}\Gamma (2 \alpha )}
\int_{0}^{t} (t-s)^{2 \alpha -1} \left( \int_{0}^{T} \left|f(u_{n})-f(u)\right|
\, \triangle x \right ) \triangle s \\
& \leq \frac{2 \lambda c_{2}^{2}T L_{f}}{(c_{1}T)^{4}\Gamma (2 \alpha )} 
\int_{0}^{t} (t-s)^{2 \alpha -1} \left( \int_{0}^{T} \left|u_{n}(x)-u(x)\right|\, 
\triangle x \right ) \triangle s \\
& \leq \frac{2 \lambda c_{2}^{2}T^{2} L_{f}}{(c_{1}T)^{4}\Gamma (2 \alpha )} 
\| u_{n}-u \|_{\infty}  \int_{0}^{t} (t-s)^{2 \alpha -1}  \triangle s \\
& \leq \frac{2 \lambda c_{2}^{2}T^{2} L_{f}}{(c_{1}T)^{4}\Gamma (2 \alpha )} 
\| u_{n}-u \|_{\infty}  \int_{0}^{t} (t-s)^{2 \alpha -1} ds \\
& \leq \frac{2 \lambda c_{2}^{2}T^{2(\alpha +1)} 
L_{f}}{(c_{1}T)^{4}\Gamma (2 \alpha+1 )} \| u_{n}-u \|_{\infty}.
\end{split}
\end{equation*}
It follows that
\begin{equation}
\label{eq6}
I_{2} \leq \frac{2 \lambda c_{2}^{2}T^{2(\alpha +1)} 
L_{f}}{(c_{1}T)^{4}\Gamma (2 \alpha+1 )} \| u_{n}-u \|_{\infty}.
\end{equation}
Bringing inequalities \eqref{eq5} and \eqref{eq6} into \eqref{eq4}, we have
\begin{equation*}
|K u_{n}(t) - Ku(t)| \leq  I_{1}+I_{2}\\
\leq  \left ( \frac{\lambda T^{2 \alpha}L_{f}}{(c_{1}T)^{2}
\Gamma (2 \alpha +1 )}  +  \frac{2 \lambda c_{2}^{2}T^{2(\alpha +1)} 
L_{f}}{(c_{1}T)^{4}\Gamma (2 \alpha+1 )} \right)   \| u_{n}-u \|_{\infty}.
\end{equation*}
Then
\begin{equation}
\label{eq7}
\|K u_{n} - Ku\|_{\infty} 
\leq  \left ( \frac{\lambda T^{2 \alpha}L_{f}}{(c_{1}T)^{2}\Gamma (2 \alpha +1 )} 
+ \frac{2 \lambda c_{2}^{2}T^{2(\alpha +1)} L_{f}}{(c_{1}T)^{4}
\Gamma (2 \alpha+1 )} \right)\| u_{n}-u \|_{\infty}.
\end{equation}
Hence, independently of $\lambda$, the right-hand side of the above inequality 
converges to zero as $u_{n}\rightarrow u$. Therefore, $K u_{n} \rightarrow  Ku$. 
This proves the continuity of $K$.
\end{proof}

\begin{lemma}
The operator $K$ sends bounded sets into bounded sets on 
$\mathbb{C}([0, T], \mathbb{R})$.
\end{lemma}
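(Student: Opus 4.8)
The plan is to show that the bound on $Ku$ is inherited entirely from hypothesis $(H1)$, so that $K$ in fact maps the whole space $X$ into a single fixed ball; in particular it sends bounded sets to bounded sets. First I would fix a bounded set $B \subset X$ and an arbitrary $u \in B$, and estimate $|Ku(t)|$ pointwise from the formula \eqref{eq3}. Since $f \geq c_{1} > 0$ by $(H1)$ and $\int_{0}^{T} \triangle x = T$, the denominator satisfies $\int_{0}^{T} f(u)\,\triangle x \geq c_{1}T$, hence $\left(\int_{0}^{T} f(u)\,\triangle x\right)^{2} \geq (c_{1}T)^{2}$; and since $f \leq c_{2}$, the numerator obeys $f(u(s)) \leq c_{2}$. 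Combining these two bounds yields
\[
|Ku(t)| \leq \frac{\lambda c_{2}}{(c_{1}T)^{2}\,\Gamma(2\alpha)} \int_{0}^{t} (t-s)^{2\alpha-1}\, \triangle s .
\]

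Next I would control the remaining $\triangle$-integral of the singular kernel exactly as in the proof of Lemma~\ref{m:lema}. Because $2\alpha - 1 < 0$, the map $s \mapsto (t-s)^{2\alpha-1}$ is nondecreasing on $[0,t)$, so Proposition~\ref{proposition25} gives $\int_{0}^{t} (t-s)^{2\alpha-1}\,\triangle s \leq \int_{0}^{t} (t-s)^{2\alpha-1}\, ds = \frac{t^{2\alpha}}{2\alpha} \leq \frac{T^{2\alpha}}{2\alpha}$. Substituting this and using $2\alpha\,\Gamma(2\alpha) = \Gamma(2\alpha+1)$, I obtain
\[
\|Ku\|_{\infty} \leq \frac{\lambda c_{2}\, T^{2\alpha}}{(c_{1}T)^{2}\,\Gamma(2\alpha+1)} =: R ,
\]
a constant depending only on the data $\lambda, \alpha, T, c_{1}, c_{2}$ and not on $u$ or on $B$. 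Hence $K(B) \subset \{ x \in X : \|x\|_{\infty} \leq R \}$, which proves the claim (and, a fortiori, that $K(X)$ itself is bounded).

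I do not expect a serious obstacle here. The only points requiring a little care are the treatment of the $\triangle$-integral of the unbounded kernel $(t-s)^{2\alpha-1}$ near $s = t$, handled just as in Lemma~\ref{m:lema} by monotonicity together with Proposition~\ref{proposition25}, and the verification that the denominator stays bounded away from zero, which is immediate from the lower bound $f \geq c_{1}$ in $(H1)$. It is worth remarking that the estimate is uniform over all of $X$, a fact that will presumably be reused when checking the remaining hypotheses of Schauder's theorem, the closed ball of radius $R$ being the natural convex set on which $K$ is subsequently shown to be compact.
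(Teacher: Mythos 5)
Your proof is correct and follows essentially the same route as the paper: bound the numerator and denominator using $(H1)$, then compare the $\triangle$-integral of $(t-s)^{2\alpha-1}$ with the corresponding Riemann integral via monotonicity and Proposition~\ref{proposition25} to get the factor $T^{2\alpha}/\Gamma(2\alpha+1)$. The only cosmetic difference is that you use the global bound $f\leq c_{2}$ where the paper writes $M=\sup_{B_{r}}f/(c_{1}T)^{2}$, so your estimate is uniform over all of $X$ rather than just over $B_{r}$, which changes nothing essential.
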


\begin{proof}
Let $I= [0, T]$. We need to prove that for all $r>0$ there exists $l > 0$ such that 
for all $u \in B_{r}= \{ u \in \mathbb{C}(I, \mathbb{R}, \|u\|_{\infty} \leq r \}$ 
we have $\|K(u)\|_{\infty} \leq l$. Let $t \in I$ and $u \in B_{r}$. Then,
\begin{equation*}
\begin{split}
|K(u(t))| 
&  \leq \frac{\lambda}{\Gamma (2 \alpha )} 
\int_{0}^{t} (t-s)^{2 \alpha -1} 
\frac{| f(u(s))|}{\left( \int_{0}^{T} f(u)\, \triangle x\right)^{2}} \triangle s\\
&\leq \frac{\lambda M}{\Gamma (2 \alpha )}   \int_{0}^{t} (t-s)^{2 \alpha -1}  \triangle s\\
&\leq \frac{\lambda M}{\Gamma (2 \alpha )}   \int_{0}^{t} (t-s)^{2 \alpha -1}  ds\\
&\leq \frac{\lambda M T^{2\alpha}}{\Gamma (2 \alpha +1 )},
\end{split}
\end{equation*}
where $M=\frac{\sup_{B_{r}} f}{(c_{1}T)^{2}} $. Hence,
taking the supremum over $t$, it follows that
$$
\|K(u)\|_{\infty} \leq \frac{\lambda M T^{2\alpha}}{\Gamma (2 \alpha +1 )},
$$
that is, $K(u)$ is bounded.
\end{proof}

Now, we shall prove that $K(B_{r})$ is an equicontinuous set in $X$.
This ends the proof of our Theorem~\ref{thm1}.

\begin{lemma}
The operator $K$ sends bounded sets into equicontinuous 
sets of $\mathbb{C}(I, \mathbb{R})$.
\end{lemma}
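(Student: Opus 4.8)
The plan is to prove equicontinuity of $K(B_r)$ by a direct estimate: fix $r>0$, take $u\in B_r$ and two points $t_1<t_2$ in $I=[0,T]$, and bound $|Ku(t_2)-Ku(t_1)|$ by a quantity that tends to $0$ as $t_2-t_1\to0$, uniformly in $u$. First I would write
\[
Ku(t_2)-Ku(t_1)
= \frac{\lambda}{\Gamma(2\alpha)} \int_0^{t_2} (t_2-s)^{2\alpha-1} \frac{f(u(s))}{\left(\int_0^T f(u)\,\triangle x\right)^2}\,\triangle s
- \frac{\lambda}{\Gamma(2\alpha)} \int_0^{t_1} (t_1-s)^{2\alpha-1} \frac{f(u(s))}{\left(\int_0^T f(u)\,\triangle x\right)^2}\,\triangle s,
\]
and then split the first integral at $s=t_1$, so that the difference becomes a sum of two pieces: the ``tail'' piece $\frac{\lambda}{\Gamma(2\alpha)}\int_{t_1}^{t_2}(t_2-s)^{2\alpha-1}\frac{f(u(s))}{(\int_0^T f(u)\triangle x)^2}\,\triangle s$, and the ``overlap'' piece $\frac{\lambda}{\Gamma(2\alpha)}\int_0^{t_1}\big[(t_2-s)^{2\alpha-1}-(t_1-s)^{2\alpha-1}\big]\frac{f(u(s))}{(\int_0^T f(u)\triangle x)^2}\,\triangle s$.

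Next I would bound both pieces uniformly in $u\in B_r$ using hypothesis (H1): the factor $\frac{f(u(s))}{(\int_0^T f(u)\triangle x)^2}$ is dominated by $\frac{c_2}{(c_1 T)^2}=:M_0$ (using $\int_0^T f(u)\,\triangle x\ge c_1 T$, which follows from the lower bound $f\ge c_1$ and Proposition~\ref{proposition25} as in the earlier lemmas). For the $\triangle$-integrals I would pass to ordinary Lebesgue integrals via Proposition~\ref{proposition25}, exactly as done in Lemma~\ref{m:lema}, noting that $2\alpha-1<0$ makes the relevant integrands monotone in $s$ on each interval. This yields
\[
|Ku(t_2)-Ku(t_1)|
\le \frac{\lambda M_0}{\Gamma(2\alpha)}\left( \int_{t_1}^{t_2}(t_2-s)^{2\alpha-1}\,ds
+ \int_0^{t_1}\left|(t_1-s)^{2\alpha-1}-(t_2-s)^{2\alpha-1}\right|ds \right).
\]
The first term integrates to $\frac{(t_2-t_1)^{2\alpha}}{2\alpha}$, and the second, since $(t_1-s)^{2\alpha-1}\ge(t_2-s)^{2\alpha-1}$, integrates to $\frac{1}{2\alpha}\big(t_1^{2\alpha}-t_2^{2\alpha}+(t_2-t_1)^{2\alpha}\big)\le \frac{1}{2\alpha}(t_2-t_1)^{2\alpha}$ by subadditivity of $x\mapsto x^{2\alpha}$ for $0<2\alpha<1$. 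Hence $|Ku(t_2)-Ku(t_1)|\le \frac{2\lambda M_0}{\Gamma(2\alpha+1)}(t_2-t_1)^{2\alpha}$, a bound independent of $u$ that vanishes as $t_2\to t_1$; this gives equicontinuity of $K(B_r)$.

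Finally I would assemble the three lemmas: $K$ is continuous, maps $B_r$ into a bounded set, and $K(B_r)$ is equicontinuous, so by the Arzelà--Ascoli theorem $K(B_r)$ is relatively compact in $X$; choosing $r$ so that the bound $\frac{\lambda M T^{2\alpha}}{\Gamma(2\alpha+1)}\le r$ from the boundedness lemma holds (possible since $M=\frac{\sup_{B_r}f}{(c_1T)^2}\le \frac{c_2}{(c_1T)^2}$ is bounded independently of $r$) makes $B_r$ invariant under $K$, and Schauder's fixed point theorem yields a fixed point $u\in B_r$, which by the integral-representation lemma is a solution of \eqref{eq1} for every $\lambda>0$. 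The main obstacle is the overlap term: one must correctly handle the sign of $(t_1-s)^{2\alpha-1}-(t_2-s)^{2\alpha-1}$ and the singularity of $(t_1-s)^{2\alpha-1}$ at $s=t_1$, and justify replacing the $\triangle$-integrals by Lebesgue integrals via Proposition~\ref{proposition25} on the correct monotonicity intervals — but this is essentially the same manoeuvre already used in the continuity lemma, so it is routine rather than deep.
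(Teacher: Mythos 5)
Your proof is correct and follows the same overall strategy as the paper (a direct kernel estimate of $|Ku(t_2)-Ku(t_1)|$, followed by Arzel\`a--Ascoli and Schauder), but your splitting is a genuinely different — and cleaner — decomposition. The paper adds and subtracts $(t_1-s)^{2\alpha-1}$ inside the integral over $[0,t_2]$, so its intermediate expressions contain $(t_1-s)^{2\alpha-1}$ for $s\in(t_1,t_2)$, a negative base raised to the fractional power $2\alpha-1<0$, and its final bound is $\frac{\lambda c_2}{(c_1T)^2\Gamma(2\alpha+1)}\left|t_2^{2\alpha}-t_1^{2\alpha}\right|$. You instead split the first integral at $s=t_1$ into a tail over $[t_1,t_2]$ with kernel $(t_2-s)^{2\alpha-1}$ and an overlap over $[0,t_1]$ with kernel $(t_1-s)^{2\alpha-1}-(t_2-s)^{2\alpha-1}$; both kernels are well defined, nonnegative and increasing in $s$ (since $2\alpha-1<0$, and for the difference because $(1-2\alpha)\left[(t_1-s)^{2\alpha-2}-(t_2-s)^{2\alpha-2}\right]>0$), so Proposition~\ref{proposition25} applies on each piece, and the elementary computation plus subadditivity of $x\mapsto x^{2\alpha}$ gives the uniform H\"older-type modulus $\frac{2\lambda c_2}{(c_1T)^2\Gamma(2\alpha+1)}(t_2-t_1)^{2\alpha}$. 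Both bounds yield equicontinuity uniformly on $B_r$, but yours avoids the ill-defined term in the paper's chain of inequalities. Your closing step — choosing $r\ge \lambda c_2 T^{2\alpha}/\left((c_1T)^2\Gamma(2\alpha+1)\right)$ so that $K(B_r)\subset B_r$ before invoking Schauder — also makes explicit the invariant closed convex set that the paper leaves implicit, which strengthens rather than alters the argument.
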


\begin{proof}
Let $t_{1}, t_{2} \in I$ such that $0 \leq t_{1} < t_{2} \leq T$, 
$B_{r}$ is a bounded set of $\mathbb{C}(I, \mathbb{R})$ and $u \in B_{r}$. Then,
\begin{equation*}
\begin{split}
|K(u(t_{2}))- K(u(t_{1}))|
&\leq \frac{\lambda}{\Gamma (2 \alpha )} 
\left| \int_{0}^{t_{2}} \left(t_{2}-s\right)^{2 \alpha -1} 
\frac{f(u(s))}{\left(\int_{0}^{T} f(u)\, \triangle x\right)^{2}} \triangle s\right. \\
& \qquad \left. - \int_{0}^{t_{1}} \left(t_{1}-s\right)^{2 \alpha -1} 
\frac{f(u(s))}{\left(\int_{0}^{T} f(u)\, \triangle x\right)^{2}} \triangle s \right|\\
\end{split}
\end{equation*}
\begin{equation*}
\begin{split}
&  \leq \frac{\lambda}{\Gamma (2 \alpha)} \left| 
\int_{0}^{t_{2}} \left(\left(t_{2}-s\right)^{2 \alpha -1} 
- \left(t_{1}-s\right)^{2 \alpha -1} + \left(t_{1}-s\right)^{2 \alpha -1} \right)  
\frac{f(u(s))}{\left(\int_{0}^{T} f(u)\, \triangle x\right)^{2}} \triangle s \right.\\
&\qquad \left. - \int_{0}^{t_{1}} \left(t_{1}-s\right)^{2 \alpha -1} 
\frac{f(u(s))}{\left(\int_{0}^{T} f(u)\, \triangle x\right)^{2}} \triangle s \right|\\
& \leq \frac{\lambda}{\Gamma (2 \alpha)} \left| 
\int_{0}^{t_{2}} \left(\left(t_{2}-s\right)^{2 \alpha -1} 
- \left(t_{1}-s\right)^{2 \alpha -1}\right) 
\frac{f(u(s))}{\left(\int_{0}^{T} f(u)\, \triangle x\right)^{2}} \triangle s\right. \\
&\qquad \left. + \int_{t_{1}}^{t_{2}} \left(t_{1}-s\right)^{2 \alpha -1}
\frac{f(u(s))}{\left(\int_{0}^{T} f(u)\, \triangle x\right)^{2}} \triangle s \right|\\
&  \leq \frac{\lambda c_{2}}{\left(c_{1}T\right)^{2}\Gamma (2 \alpha )} 
\left| \int_{0}^{t_{2}} \left(\left(t_{2}-s\right)^{2 \alpha -1} 
- \left(t_{1}-s\right)^{2 \alpha -1} \right) ds 
+ \int_{t_{1}}^{t_{2}} \left(t_{1}-s\right)^{2 \alpha -1} ds \right|\\
& \leq \frac{\lambda c_{2}}{\left(c_{1}T\right)^{2}\Gamma \left(2 \alpha +1\right)} 
\left| t_{1}^{2 \alpha} - t_{2}^{2 \alpha} 
+ \left(t_{1}-t_{2}\right)^{2 \alpha} - \left(t_{1}-t_{2}\right)^{2 \alpha}\right| \\
&\leq \frac{\lambda c_{2}}{\left(c_{1}T\right)^{2}\Gamma (2 \alpha +1)} 
\left| t_{2}^{2 \alpha} - t_{1}^{2 \alpha}\right|.
\end{split}
\end{equation*}
Because the right-hand side of the above inequality does not depend
on $u$ and tends to zero when $t_{2}\rightarrow t_{1}$, we
conclude that $K(\overline{B_{r}})$ is relatively compact.
Hence, $B$ is compact by the Arzela--Ascoli theorem. Consequently, since
$K$ is continuous, it follows by Schauder's fixed point theorem \citep{cronin}
that problem \eqref{eq1} has a solution on $I$. 
This ends the proof of Theorem~\ref{thm1}.
\end{proof}

% ----------------------

\subsection{Uniqueness}

We now derive uniqueness of solution to problem \eqref{eq1}.

\begin{theorem}[Uniqueness of solution]
\label{corollary}
Let $ 0< \alpha < \frac{1}{2}$ and $f$ satisfies  hypothesis $(H1)$. If
$$
0 < \lambda <  \left( 
\frac{ T^{2 \alpha}L_{f}}{(c_{1}T)^{2}\Gamma (2 \alpha +1 )}  
+ \frac{2 c_{2}^{2}T^{2(\alpha +1)} 
L_{f}}{(c_{1}T)^{4}\Gamma (2 \alpha+1 )} \right)^{-1},
$$
then the solution predicted by Theorem~\ref{thm1} is unique.
\end{theorem}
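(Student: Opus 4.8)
The plan is to recognize that the continuity estimate \eqref{eq7}, already established in the proof of Lemma~\ref{m:lema}, is in fact a contraction estimate, and then to apply the Banach fixed point theorem on the Banach space $X = \left(C([0,T]), \|\cdot\|_{\infty}\right)$.

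First I would observe that the chain of inequalities producing \eqref{eq5}, \eqref{eq6}, and hence \eqref{eq7}, never used that $u_{n}$ was a convergent sequence: replacing $u_{n}$ by an arbitrary $v \in X$, the very same computation --- invoking hypothesis $(H1)$, Proposition~\ref{proposition25}, and the monotonicity of $s \mapsto (t-s)^{2\alpha-1}$ --- yields, for all $u, v \in X$,
\[
\|Ku - Kv\|_{\infty} \leq \lambda\, C\, \|u - v\|_{\infty}, \qquad
C := \frac{T^{2 \alpha}L_{f}}{(c_{1}T)^{2}\Gamma (2 \alpha +1 )}
+ \frac{2 c_{2}^{2}T^{2(\alpha +1)} L_{f}}{(c_{1}T)^{4}\Gamma (2 \alpha+1 )}.
\]
Next I would note that the hypothesis $0 < \lambda < C^{-1}$ is precisely the requirement $\lambda C < 1$, so that $K : X \to X$ is a contraction. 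Since $X$ is complete, the Banach fixed point theorem provides a unique fixed point of $K$ in $X$. As $u \in X$ solves \eqref{eq1} if and only if $u = Ku$ (via the integral representation \eqref{eq2}--\eqref{eq3}), the solution furnished by Theorem~\ref{thm1} is the only one, which is the claim.

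I expect no genuine obstacle here: the entire difficulty was already dispatched inside the proof of Lemma~\ref{m:lema}, and the present statement merely repackages \eqref{eq7} under the smallness assumption on $\lambda$. The one point worth recording explicitly is that \eqref{eq7} holds for every pair of functions in $X$, not merely along a sequence, which is transparent from its derivation. If one prefers to avoid quoting a named theorem, the argument can be made self-contained: given two solutions $u$ and $v$ of \eqref{eq1}, one has $\|u - v\|_{\infty} = \|Ku - Kv\|_{\infty} \leq \lambda C \|u - v\|_{\infty}$ with $\lambda C < 1$, which forces $\|u - v\|_{\infty} = 0$, i.e.\ $u = v$.
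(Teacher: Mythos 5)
Your proposal is correct and follows essentially the same route as the paper: both read the estimate \eqref{eq7} as a Lipschitz bound for $K$ valid for arbitrary pairs in $X$, note that the hypothesis on $\lambda$ makes $K$ a contraction, and conclude uniqueness via the Banach fixed point principle. Your explicit remark that \eqref{eq7} holds for any $u,v\in X$ (not just along a convergent sequence), and your short direct argument $\|u-v\|_{\infty}\leq \lambda C\|u-v\|_{\infty}$, are welcome clarifications of steps the paper leaves implicit.
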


\begin{proof}
Let $u$ and $v$ be two solutions of \eqref{eq1}. 
Then, from \eqref{eq7}, one has
\begin{equation*}
\|K v - Ku\|_{\infty} 
\leq  \left ( \frac{\lambda T^{2 \alpha}L_{f}}{(c_{1}T)^{2}\Gamma (2 \alpha +1 )}  
+ \frac{2 \lambda c_{2}^{2}T^{2(\alpha +1)} 
L_{f}}{(c_{1}T)^{4}\Gamma (2 \alpha+1 )} \right) \| v-u \|_{\infty}.
\end{equation*}
Choosing $\lambda$ such that 
$0 < \lambda <  \left ( \frac{ T^{2 \alpha}L_{f}}{(c_{1}T)^{2}\Gamma (2 \alpha +1 )}  
+ \frac{2 c_{2}^{2}T^{2(\alpha +1)} L_{f}}{(c_{1}T)^{4}\Gamma (2 \alpha+1 )} \right)^{-1}$,
the map $ K : X \to X $ is a contraction. It follows by the Banach principle 
that it has a fixed point $u=Fu$. Hnce, there exists a unique 
$u \in X$ that is solution of \eqref{eq2}.
\end{proof}

% -------------------------------------------------

\section*{Acknowledgements}

The authors were supported by the \emph{Center for Research
and Development in Mathematics and Applications} (CIDMA)
of the University of Aveiro, through Funda\c{c}\~ao 
para a Ci\^encia e a Tecnologia (FCT), 
within project UID/MAT/04106/2013. They are grateful
to two anonymous referees for several comments
and suggestions.

% -------------------------------------------------

\small

% -------------------------------------------------

% -------------------------------------------------

\end{document}